\documentclass[a4paper,12pt]{article}

\usepackage[T1]{fontenc}
\usepackage[english]{babel}
\usepackage{amsmath,amssymb,amsfonts,amsthm,mathtools}
\usepackage{mathrsfs}
\usepackage{dirtytalk}
\usepackage{longtable}
\usepackage[hidelinks]{hyperref}
\usepackage{xurl}
\usepackage{booktabs}
\usepackage{algorithm}
\usepackage[noend]{algpseudocode}
\usepackage{comment}

\algnewcommand{\IfThen}[2]{%
  \State \algorithmicif\ #1\ \algorithmicthen\ #2%
}

\algnewcommand{\IfThenElse}[3]{%
  \State \algorithmicif\ #1\ \algorithmicthen\ #2\ \algorithmicelse\ #3%
}

\theoremstyle{plain}
\newtheorem{theorem}{Theorem}
\newtheorem{corollary}[theorem]{Corollary}
\newtheorem{lemma}[theorem]{Lemma}

\theoremstyle{definition}
\newtheorem{definition}{Definition}

\theoremstyle{definition}

\newcommand{\Aut}{\ensuremath{\mathrm{Aut}}}

\begin{document}

\title{Equivalence of Complex Hadamard Matrices}

\author{
Patric R. J. \"Osterg\aa rd and Tuomo Valtonen\\
Department of Information and Communications Engineering\\
Aalto University School of Electrical Engineering\\
P.O.\ Box 15600, 00076 Aalto, Finland\\
Email: {\tt \{patric.ostergard,tuomo.valtonen\}@aalto.fi}}

\date{}

\maketitle

\begin{abstract}
Symmetry in the context of equivalence or isomorphism is a
fundamental and natural concept in any study of discrete 
structures. Symmetries are also important for non-discrete
structures, but their treatment can be more challenging 
and is perhaps therefore often overlooked. This holds for
many studies of complex Hadamard matrices, that is, matrices
with unimodular complex entries satisfying the equation 
$HH^{\dagger} = nI$, where $H^{\dagger}$ is the conjugate
transpose of $H$. In the current work, equivalence 
of complex Hadamard matrices is considered, and algorithms
for determining equivalence of matrices and the automorphism 
group of a matrix are presented. The algorithms are used to establish the automorphism group of a large number of complex Hadamard matrices from the literature. 
\end{abstract}

\noindent
{\bf Keywords:} Automorphism group, Butson-type Hadamard matrix, complex Hadamard matrix, equivalence. 

\section{Introduction}

A \emph{complex Hadamard matrix of order $n$} is an $n \times n$
matrix $H = (h_{ij})$ with unimodular complex entries, that is, 
$h_{ij} \in S \coloneqq \{z \in \mathbb{C} : |z| = 1\}$ 
for all $1\leq i,j\leq n$, such that $HH^{\dagger}=nI$, where $H^{\dagger}$ is
the conjugate transpose of $H$ and $I$ is the identity matrix.
For a survey of complex Hadamard matrices, see \cite{TZ06}.

Complex Hadamard matrices with entries that are $q$th roots of
unity for some positive integer $q$ are called 
\emph{Butson-type Hadamard matrices}. (Real) \emph{Hadamard matrices}
are Butson-type Hadamard matrices with $q=2$. Equivalence of 
Butson-type Hadamard matrices is discussed in \cite{O22};
the definitions and the terminology in the current work
are in line with that study. 

We define the 
group $\mathcal{G}_n$ as the matrix group 
that is the subgroup of $\mbox{GL}(n,\mathbb{C})$ whose elements 
are the matrices in which each row and column contains exactly one 
element of $S$, and all other entries are 0. We further let the group 
$\mathcal{G}_n \times \mathcal{G}_n$ 
act on an $n \times n$ matrix $M$ by $(X,Y)\cdot M = XMY^{\mathrm{\dagger}}$.
The matrix obtained by complex conjugation of each entry of 
a matrix $A$ is denoted by ${\overline {A}}$.

\begin{definition}
\label{def:eq}
Two $n\times n$ matrices with entries from $S$, 
$H_1$ and $H_2$, are said to be \emph{equivalent},
denoted by $H_1\cong H_2$, if $H_1 = XH_2Y^{\dagger}$ for
some $(X,Y) \in \mathcal{G}_n \times \mathcal{G}_n$
and they are said to be \emph{Hadamard equivalent} if
$H_1\cong H_2$ or $H_1\cong \overline{H_2}$.
\end{definition}

The convention in the field is not to include operations involving transposition, $H^{\mathrm T}$ and $H^{\dagger}$, in any concept of equivalence.

Symmetries of complex matrices can be studied in the context of 
mappings from matrices onto themselves, which leads to the concepts
of automorphisms and automorphism groups. We shall get back to a
formal definition and treatment of these later.

Whereas a consideration of equivalence and automorphisms is present in 
virtually all studies of constructing and classifying real Hadamard
matrices and Butson-type Hadamard matrices,  
almost no attention has been paid to these in studies of complex Hadamard
matrices. Probable reasons for this are the challenges due to 
non-discreteness and the scenario of typically considering families 
of matrices rather than individual matrices. In the current work
we shall deal with these issues and, in particular, develop algorithms for determining equivalence of complex Hadamard matrices and for finding the automorphism group.

The paper is organized as follows. In Section~\ref{preliminaries}, we present the necessary preliminaries. In Section~\ref{algs}, we present algorithms for determining equivalence and for finding automorphism groups. In Section~\ref{inv}, we show how to apply invariants to improve the performance of the algorithms. In Section~\ref{SH}, we present algorithms for determining whether there exists a symmetric matrix or a Hermitian matrix equivalent to a given complex Hadamard matrix. In Section~\ref{implementation}, we discuss practicalities related to the implementations of the algorithm.  Finally, in Section~\ref{S3}, we discuss applications of the constructed algorithms and utilize the implementations to compute the automorphism groups of a large number of (families of) complex Hadamard matrices that appear in the literature.
\section{Automorphisms and Equivalence}
\label{S2}

Before discussing algorithms, we need to have a
closer look at some details of the theory and introduce some 
further concepts.

\subsection{Preliminaries}
\label{preliminaries}

Let us start by considering the group 
action utilized in Definition~\ref{def:eq} and giving a precise definition
of automorphism. One important fact is that the action 
of $\mathcal{G}_n \times \mathcal{G}_n$ is not faithful as
$(sI,sI)\cdot M = sIM\overline{s}I = M$ for any $s \in S$. The
kernel 
\begin{equation*}
\label{eq:kernel}
K := \{(sI,sI) : s \in S\} 
\end{equation*}
is obviously infinite, but can
be factored out as a normal subgroup.

Let $H$ be a complex Hadamard matrix. Consider the elements
$(X,Y) \in \mathcal{G}_n \times \mathcal{G}_n$ for
which $H = XHY^{\dagger}$. Those elements form a subgroup
$G$ of $\mathcal{G}_n \times \mathcal{G}_n$, and $G/K$ is
said to be the \emph{automorphism group} of $H$. The elements
of $G/K$ are called \emph{automorphisms} and subgroups of 
$G/K$ are \emph{groups of automorphisms}. 
The corresponding definitions for
Hadamard equivalence are analogous.

A \emph{dephased} complex Hadamard matrix is a matrix in which all 
elements in the first row and first column are 1. For other types
of Hadamard matrices, the term \emph{normalized} is commonly used
for this property. It is straightforward to turn an arbitrary
complex Hadamard matrix into an equivalent dephased one: Multiply rows and columns one by one with the conjugate of the
element in the first entry.

\begin{theorem}
\label{theorem1}
Let $H$ be a complex Hadamard matrix. The pair of diagonal matrices $(D_1,D_2) \in \mathcal{G}_n \times \mathcal{G}_n$ for which $(D_1,D_2)\cdot H$ is dephased is unique up to multiplication by elements in the kernel.

\end{theorem}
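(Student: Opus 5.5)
We prove existence and uniqueness separately; uniqueness is the real content. Write $D_1=\mathrm{diag}(a_1,\dots,a_n)$ and $D_2=\mathrm{diag}(b_1,\dots,b_n)$ with all $a_i,b_j\in S$. Since $D_2^{\dagger}=\mathrm{diag}(\overline{b_1},\dots,\overline{b_n})$, the $(i,j)$ entry of $(D_1,D_2)\cdot H=D_1HD_2^{\dagger}$ is $a_i h_{ij}\overline{b_j}$. The matrix is dephased exactly when
\begin{equation*}
a_1 h_{1j}\overline{b_j}=1 \quad (1\le j\le n), \qquad a_i h_{i1}\overline{b_1}=1 \quad (1\le i\le n).
\end{equation*}
Every entry of $H$ lies in $S$, so these conditions can be solved for the scalars.

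\textbf{Existence.} Set $a_1=1$. The first-row equations then force $b_j=h_{1j}$ for all $j$; in particular $b_1=h_{11}$. The first-column equations force $a_i=b_1\overline{h_{i1}}=h_{11}\overline{h_{i1}}$. For $i=1$ this gives $a_1=h_{11}\overline{h_{11}}=1$, which agrees with our choice. All these values have modulus $1$, so the pair lies in $\mathcal{G}_n\times\mathcal{G}_n$ and dephases $H$. This matches the procedure described before the theorem.

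\textbf{Uniqueness up to the kernel.} Take any solution $(D_1,D_2)$ and put $s:=a_1\in S$. The equation $a_1h_{1j}\overline{b_j}=1$ gives $b_j=a_1h_{1j}=s\,h_{1j}$ for every $j$. Substituting $b_1=s\,h_{11}$ into the first-column equations gives $a_i=b_1\overline{h_{i1}}=s\,h_{11}\overline{h_{i1}}$. So $(D_1,D_2)=(sI,sI)(D_1^{0},D_2^{0})$, where $(D_1^{0},D_2^{0})$ is the solution found with $a_1=1$. Hence any two dephasing pairs differ by multiplication by an element of $K$. Conversely, multiplying a solution by $(sI,sI)\in K$ gives another solution, because $K$ acts trivially.

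The only delicate point is reading the statement correctly: uniqueness holds among \emph{diagonal} pairs. Among general monomial pairs in $\mathcal{G}_n\times\mathcal{G}_n$ it fails, since permutations can also dephase. The free parameter $a_1$ is absorbed exactly by $K$. The Hadamard property $HH^{\dagger}=nI$ is never used; only unimodularity of the entries matters.
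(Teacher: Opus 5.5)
Your proof is correct and takes essentially the same route as the paper. Both arguments write out the dephasing conditions on the first row and first column, solve them in terms of a single free unimodular scalar ($a_1$ for you, $r_1$ in the paper), and observe that this scalar is absorbed exactly by the kernel $K$. Your handling of the $D_2^{\dagger}$ convention is slightly more explicit than the paper's, and your normalized pair (with $a_1=1$) differs from the paper's explicit $(D_1,D_2)$ only by the kernel element $(\overline{h_{1,1}}I,\overline{h_{1,1}}I)$.
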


\begin{proof}
Consider a matrix $H$ and multiply column vector $i$ with $c_i$ 
and row vector $i$ with $r_i$, for $1 \leq i \leq n$. 
To get a dephased matrix, we
must have 
\[
r_1c_1h_{1,1} = 1,\ r_1c_2h_{1,2}=1,\ldots,\
r_1c_nh_{1,n} = 1,\ r_2c_1h_{2,1}=1,\ldots,\
r_nc_1h_{n,1}=1.
\]
We then get a solution
\[
\begin{array}{l}
c_1 = \overline{r_1h_{1,1}},\
c_2 = \overline{r_1h_{1,2}},\ldots,\
c_n = \overline{r_1h_{1,n}},\\
r_1 = r_1,\
r_2 = r_1h_{1,1}\overline{h_{2,1}},\ldots,\
r_n = r_1h_{1,1}\overline{h_{n,1}},
\end{array}
\]
\noindent 
which is a function of $r_1$. Equivalently, $H$ is dephased by the action of any pair of diagonal matrices
\[
(r_1D_1,r_1D_2)=(r_1I,r_1I)(D_1,D_2),
\]
where $D_1=\operatorname{diag}(\overline{h_{1,1}},\overline{h_{2,1}},\ldots,\overline{h_{n,1}})$,
$D_2=\operatorname{diag}\bigl(1,\overline{h_{1,1}}h_{1,2},\ldots,\overline{h_{1,1}}h_{1,n}\bigr)$,
and $r_1\in S$.
\end{proof}

With the notation in the proof of Theorem~\ref{theorem1} we define a dephasing operator $\mathscr{D}$ by $\mathscr{D}(H)\coloneqq D_1HD_2^\dagger.$

Every element $M \in \mathcal{G}_n$ can be written uniquely as $M = DP$, where $D$ is
a diagonal matrix with unimodular entries and $P$ is a permutation matrix. We have a group homomorphism $\varphi$ from the automorphism group $G/K$ of a complex Hadamard matrix $H$ to the group $S_n \times S_n$, where $S_n$ is a symmetric group of degree $n$, given by $(D_1P_1,D_2P_2)K\mapsto (P_1,P_2)$. We set $\operatorname{Aut}(H):=\varphi(G/K)$.

\begin{theorem}
\label{autt}
Let $H$ be a complex Hadamard matrix with automorphism group $G/K$. The groups $G/K$ and $\operatorname{Aut}(H)$ are isomorphic.
\end{theorem}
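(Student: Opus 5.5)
Since $\operatorname{Aut}(H)$ is defined as the image $\varphi(G/K)$, the map $\varphi\colon G/K\to\operatorname{Aut}(H)$ is surjective by definition. So the whole task is to show that $\varphi$ is a well-defined homomorphism with trivial kernel. I will handle these three points in that order.

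\emph{Well-definedness.} Elements of $K$ are pairs $(sI,sI)$ of diagonal matrices. Multiplying $(D_1P_1,D_2P_2)$ by such a pair only rescales the diagonal parts, giving $(sD_1P_1, sD_2P_2)$. The permutation parts are therefore unchanged, so $\varphi$ is well defined on cosets.

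\emph{Homomorphism.} Since diagonal matrices with unimodular entries form a normal subgroup of $\mathcal{G}_n$, we have $P D = D' P$ with $D' = P D P^{-1}$ again diagonal. Hence
\[
(D_1P_1)(D_1'P_1') = \bigl(D_1 P_1 D_1' P_1^{-1}\bigr)\, P_1 P_1'.
\]
Thus the map $DP\mapsto P$ is a homomorphism $\mathcal{G}_n\to S_n$, and $\varphi$ is a homomorphism.

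\emph{Injectivity.} This is the main step. Suppose $(X,Y)=(D_1P_1,D_2P_2)\in G$ with $\varphi((X,Y)K)=(I,I)$, so $P_1=P_2=I$. Then $X=D_1$ and $Y=D_2$ are diagonal and $D_1HD_2^{\dagger}=H$. I want to conclude that $(D_1,D_2)\in K$.

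For this I use Theorem~\ref{theorem1}. Let $(E_1,E_2)$ be a pair of diagonal matrices dephasing $H$, so $(E_1,E_2)\cdot H=\mathscr{D}(H)$ is dephased. Then
\[
(E_1D_1,\,E_2D_2)\cdot H = (E_1,E_2)\cdot\bigl((D_1,D_2)\cdot H\bigr) = (E_1,E_2)\cdot H
\]
is also dephased, and $(E_1D_1,E_2D_2)$ is again a pair of diagonal matrices. By the uniqueness in Theorem~\ref{theorem1}, $(E_1D_1,E_2D_2)=(sE_1,sE_2)$ for some $s\in S$. Since diagonal matrices commute and are invertible, this gives $D_1=D_2=sI$, that is, $(D_1,D_2)\in K$.

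This can also be checked directly. The entrywise equation reads $d^{(1)}_i h_{ij}\overline{d^{(2)}_j}=h_{ij}$, and since $h_{ij}\neq 0$ we get $d^{(1)}_i=d^{(2)}_j$ for all $i,j$. Hence all these entries equal one common $s$.

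Therefore the kernel of $\varphi$ is trivial, and $\varphi$ is an isomorphism from $G/K$ onto $\operatorname{Aut}(H)$.
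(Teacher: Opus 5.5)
Your proof is correct and follows the paper's argument. Surjectivity holds by definition, and injectivity comes from applying the uniqueness of the dephasing pair in Theorem~\ref{theorem1} to $(E_1D_1,E_2D_2)$. Your explicit checks of well-definedness and of the homomorphism property, and your direct entrywise argument $d^{(1)}_i=d^{(2)}_j$ (which needs only that the entries of $H$ are nonzero), are valid additions that the paper leaves implicit.
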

\begin{proof}
    
    The map $\varphi$ is a group homomorphism and surjective to its image, so we only need to show that $\varphi$ is injective, that is, ker$(\varphi)=\{K\}$. Let $(D_1P_1,D_2P_2)K\in \text{ker}(\varphi).$ As $\varphi((D_1P_1,D_2P_2)K)=(I,I)$ we must have $(P_1,P_2) = (I,I)$, which gives an element of the form $(D_1,D_2)K$. Let $(D'_1, D'_2)$ be a pair of diagonal matrices with unimodular entries such that $(D_1',D'_2)\cdot H$ is dephased. Then $(D_1'D_1,D_2'D_2)\cdot H$ is dephased as $(D_1,D_2)K$ is an automorphism. By Theorem~\ref{theorem1}, $(D_1'D_1,D_2'D_2)\in (D_1',D'_2)K$ implying $(D_1,D_2)K =  K$. Therefore, ker$(\varphi)=\{K\}$.
\end{proof}

\begin{corollary}
The automorphism group of a finite complex Hadamard
matrix is finite.
\end{corollary}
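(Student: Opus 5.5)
The corollary follows almost immediately from Theorem~\ref{autt}, so the plan is to transfer finiteness across the isomorphism established there. Here ``finite'' complex Hadamard matrix means a matrix of finite order $n$.

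First, recall that $\operatorname{Aut}(H)$ is defined as the image $\varphi(G/K)$, where $\varphi\colon G/K \to S_n \times S_n$ is given by $(D_1P_1,D_2P_2)K \mapsto (P_1,P_2)$. Thus $\operatorname{Aut}(H)$ is a subgroup of $S_n \times S_n$. This group has order $(n!)^2$, which is finite because $n$ is finite. Consequently $|\operatorname{Aut}(H)| \le (n!)^2$.

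Second, by Theorem~\ref{autt} the map $\varphi$ is an isomorphism from $G/K$ onto $\operatorname{Aut}(H)$. In particular it is a bijection, so $|G/K| = |\operatorname{Aut}(H)| \le (n!)^2 < \infty$. This is the claim: the automorphism group $G/K$ is finite, even though $G$ itself is infinite because it contains the infinite kernel $K$.

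There is no real obstacle here; all the substantive work (injectivity of $\varphi$) was done in Theorem~\ref{autt} via the uniqueness of dephasing from Theorem~\ref{theorem1}. The one point to state explicitly is that the bound does not depend on the continuous parameters of $H$. It is the injectivity of $\varphi$ that rules out a continuum of diagonal automorphisms, and this is exactly where the non-discreteness of $S$ could otherwise have produced an infinite group.
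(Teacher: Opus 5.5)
Your proposal is correct and is the paper's argument: Theorem~\ref{autt} identifies $G/K$ with $\operatorname{Aut}(H)$, a subgroup of the finite group $S_n\times S_n$. Hence $G/K$ is finite, and your explicit bound $|G/K|\le (n!)^2$ just makes this quantitative.
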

\begin{proof}
    By Theorem~\ref{autt}, the automorphism group is isomorphic to a subgroup of a finite group, so it is finite.
\end{proof}

In the sequel, it is in many places more convenient
to deal with permutations $\pi$ over the set $\{1, 2, \ldots, n\}$ rather than $n \times n$
permutation matrices $P$. For $\pi\in S_n$ we denote by $P_\pi$ the corresponding permutation
matrix, defined by
\[
(P_\pi)_{ij} =
\begin{cases}
1, & \text{if } i = \pi(j),\\
0, & \text{otherwise}.
\end{cases}
\]
With this convention, we have
\[
(P_{\pi_1},P_{\pi_2})\cdot H = P_{\pi_1} H P_{\pi_2}^\dagger
  = P_{\pi_1} H P_{\pi_2}^T
  = (h_{\pi_1^{-1}(i),\pi_2^{-1}(j)})_{i,j}=(\pi_1,\pi_2)\cdot H.
\]
We refer to elements of $\Aut(H)$ simply as automorphisms of $H$. Depending on the context, we may regard an automorphism as a pair 
of permutations $(\pi_1,\pi_2)$, as the corresponding pair of 
permutation matrices $(P_{\pi_1},P_{\pi_2})$, or as an element of $G/K$. In terms of permutations, the group
$\operatorname{Aut}(H)$ contains exactly the permutations
$(\pi_1,\pi_2)\in S_n\times S_n$ such that $$\mathscr{D}((\pi_1,\pi_2)\cdot H)=\mathscr{D}(H).$$

\subsection{Algorithms}
\label{algs}

Algorithms for handling equivalence of real and Butson-type Hadamard
matrices can be found in the literature \cite{EFO15,LOS20,LSO13, M79},
so we want to focus on general algorithms for matrices with 
arbitrary unimodular entries.

The equivalence problem can be solved by computing canonical
forms of complex Hadamard matrices. Two matrices are equivalent
exactly when their canonical forms coincide. The automorphism group can be computed by a similar procedure, with some extra bookkeeping.

In this section, we outline algorithms for both of these tasks. All algorithms presented are high-level sketches and can be made more efficient using invariants, which we will discuss in Section~\ref{inv}. The descriptions omit details required for practical implementations, some of which we will address in Section~\ref{implementation}. We refer to \cite{CombAlgs} for the relevant background on computations with groups.

We say that two complex Hadamard matrices, $H_1$ and $H_2$, are \emph{permutation equivalent} if there exist permutation matrices $P_1$ and $P_2$ such that $H_1=(P_1,P_2)\cdot H_2=P_1H_2P_2^T$. We define $\sigma_{a,b}:=((1\;a), (1\; b))$ and $\mathscr{D}_{a,b}(H):=\mathscr{D}(\sigma_{a,b}\cdot H)$.

\begin{lemma}
\label{ab}
    Let $H_1$ be a dephased complex Hadamard matrix and $H_2$ a matrix such that $H_1\cong H_2$. There exist $a,b\in\{1,\ldots,n\}$ such that $H_1$ and $\mathscr{D}_{a,b}(H_2)$ are permutation equivalent.
\end{lemma}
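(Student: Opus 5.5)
Since $H_1\cong H_2$, we fix $(X,Y)\in\mathcal{G}_n\times\mathcal{G}_n$ with $H_1=XH_2Y^{\dagger}$ and write $X=D_XP_{\pi_1}$, $Y=D_YP_{\pi_2}$ with $D_X,D_Y$ diagonal unimodular. The plan is to commute the permutation parts to the front, so that $H_1$ becomes a permutation of a diagonally rescaled matrix whose first row and column come from a single row and column of $H_2$. Concretely,
\[
H_1=D_XP_{\pi_1}H_2P_{\pi_2}^{T}\overline{D_Y}=P_{\pi_1}\,E_1H_2E_2^{\dagger}\,P_{\pi_2}^{T},
\]
where $E_1=P_{\pi_1}^{T}D_XP_{\pi_1}$ and $E_2=P_{\pi_2}^{T}D_YP_{\pi_2}$ are again diagonal unimodular. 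This uses only that conjugating a diagonal matrix by a permutation matrix gives a diagonal matrix.

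Next we pick $a=\pi_1^{-1}(1)$ and $b=\pi_2^{-1}(1)$. These are the row and column of $H_2$ that land in position $1$ of $H_1$. We then split the permutations as $\pi_1=\tau_1\circ(1\;a)$ and $\pi_2=\tau_2\circ(1\;b)$ with $\tau_1(1)=1$ and $\tau_2(1)=1$. Pushing the transpositions through the diagonal matrices, again by conjugation, gives
\[
H_1=P_{\tau_1}\bigl(F_1\,(\sigma_{a,b}\cdot H_2)\,F_2^{\dagger}\bigr)P_{\tau_2}^{T}
\]
for some diagonal unimodular $F_1,F_2$. Set $M:=F_1(\sigma_{a,b}\cdot H_2)F_2^{\dagger}$. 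Because $\tau_1$ and $\tau_2$ fix $1$, the first row of $M$ is a permutation of the first row of $H_1$, and the same holds for the first column. Since $H_1$ is dephased, $M$ is dephased.

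Finally we invoke Theorem~\ref{theorem1} for the matrix $\sigma_{a,b}\cdot H_2$, which is complex Hadamard whenever $H_2$ is. The pair $(F_1,F_2)$ dephases it, and so does the pair used in $\mathscr{D}$. By uniqueness, the two pairs differ by an element $(sI,sI)$ of $K$, which acts trivially. Hence $M=\mathscr{D}(\sigma_{a,b}\cdot H_2)=\mathscr{D}_{a,b}(H_2)$, and therefore $H_1=P_{\tau_1}\mathscr{D}_{a,b}(H_2)P_{\tau_2}^{T}$, which is the claimed permutation equivalence.

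The main obstacle is bookkeeping rather than substance. We must verify that the permutation convention $(P_\pi)_{ij}=1$ iff $i=\pi(j)$ really makes $a=\pi_1^{-1}(1)$, and not $\pi_1(1)$, the right index; the correct choice is whichever index sends row $a$ of $H_2$ to row $1$. We must also check that the factorisation through $(1\;a)$ leaves a permutation fixing $1$. If the indices come out inverted, we simply swap to the other choice; the structure of the argument is unaffected.
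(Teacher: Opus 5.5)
Your proof is correct and follows essentially the same route as the paper. It uses the same choice $a=\pi_1^{-1}(1)$, $b=\pi_2^{-1}(1)$, the same factorization of $\pi_1,\pi_2$ through $(1\;a),(1\;b)$ into permutations fixing $1$, and the same appeal to uniqueness in Theorem~\ref{theorem1}. The only difference is bookkeeping: the paper writes $H_1=\mathscr{D}(P_{\pi_1}H_2P_{\pi_2}^T)$ and then uses the identity \eqref{commutes} that $\mathscr{D}$ commutes with permutations fixing $1$, whereas you conjugate the diagonal factors past the permutation matrices and apply uniqueness directly to $\sigma_{a,b}\cdot H_2$. Your worry about the index is unnecessary: $\pi\mapsto P_\pi$ is a homomorphism, so $\tau_1=\pi_1(1\;a)$ fixes $1$ exactly when $a=\pi_1^{-1}(1)$.
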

\begin{proof}
    First, note that for any permutations $\pi_1,\pi_2 \in S_n$ such that $\pi_1(1)=\pi_2(1)=1$ we have that 
    \begin{equation}
    \label{commutes}
      \mathscr{D}((\pi_1,\pi_2)\cdot H)= ( h_{1,1} \overline{h_{\pi_1^{-1}(i),1}} \overline{h_{1,\pi_2^{-1}(j)}} h_{\pi_1^{-1}(i),\pi_2^{-1}(j)} )_{i,j}=(\pi_1,\pi_2) \cdot \mathscr{D}(H).  
    \end{equation}
    As $H_1\cong H_2$, there exist permutation matrices $P_{\pi_1}, P_{\pi_2}$ and diagonal matrices of unimodular entries $D_1,D_2$ such that $H_1=D_1P_{\pi_1} H_2(D_2P_{\pi_2} )^\dagger$. Let $a= \pi_1^{-1}(1)$ and $b = \pi_2^{-1}(1)$. Because $(1\ a)$ maps $1$ to $a$ and $\pi_1$ maps $a$ to $1$, the composition $\pi_1(1\ a)$ leaves $1$ fixed. The same holds for $\pi_2(1\;b)$ and
   $$
    H_1 = D_1P_{\pi_1} H_2(D_2P_{\pi_2} )^\dagger
    =\mathscr{D}(P_{\pi_1} H_2P_{\pi_2}^T)
    =(P_{\pi_1} P_{(1\;a)})\mathscr{D}_{a,b}(H_2)(P_{\pi_2} P_{(1\;b)})^T,    
$$
where the second equality follows from the uniqueness of the dephased form in Theorem~\ref{theorem1}, and the third from equation \eqref{commutes}.
\end{proof}

Let us first consider the problem of recognizing when two dephased complex Hadamard matrices are permutation equivalent. The standard way to treat combinatorial problems of this kind is to encode the objects in question as colored graphs \cite[pp.~83--88]{K06}. Given an $n\times n$ matrix $A$, we can construct a vertex- and edge-colored graph on $2n$ vertices by introducing a vertex $r_i$ for each row, a vertex $c_j$ for each column, and an edge $\{r_i,c_j\}$ for each entry $a_{ij}$. Then, we assign one common color to all row vertices, another common color to all column vertices, and color edges with the value of the corresponding entry. We call the constructed graph a \emph{permutation graph} of $A$. The proof of the following result is rather straightforward and is omitted.

\begin{lemma}
\label{graph_lemma}
    Two complex Hadamard matrices, $H_1$ and $H_2$, are permutation equivalent if and only if they have isomorphic permutation graphs.
\end{lemma}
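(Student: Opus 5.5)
The plan is to prove the two directions separately, fixing the precise meaning of isomorphism of the colored graphs: a bijection of vertices that maps row vertices to row vertices, column vertices to column vertices, and preserves edge colors. Let $\Gamma(A)$ denote the permutation graph of $A$. Since every entry of a complex Hadamard matrix lies in $S$, every pair $\{r_i,c_j\}$ is an edge, so $\Gamma(A)$ is a complete bipartite graph whose only information is the edge coloring $\{r_i,c_j\}\mapsto a_{ij}$.

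For the forward direction, suppose $H_1=(\pi_1,\pi_2)\cdot H_2$. By the formula preceding the lemma, this means $(H_1)_{ij}=(H_2)_{\pi_1^{-1}(i),\pi_2^{-1}(j)}$. Define $f$ by $f(r_i)=r_{\pi_1(i)}$ and $f(c_j)=c_{\pi_2(j)}$ on the vertex set of $\Gamma(H_2)$. This map is a bijection that respects the two vertex color classes. The edge $\{r_i,c_j\}$ of $\Gamma(H_2)$ carries the color $(H_2)_{ij}$. Its image $\{r_{\pi_1(i)},c_{\pi_2(j)}\}$ carries the color $(H_1)_{\pi_1(i),\pi_2(j)}=(H_2)_{ij}$. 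Hence $f$ is an isomorphism $\Gamma(H_2)\to\Gamma(H_1)$.

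For the converse, let $f$ be a color-preserving isomorphism $\Gamma(H_2)\to\Gamma(H_1)$. Because $f$ preserves vertex colors, it maps row vertices to row vertices and column vertices to column vertices. It therefore induces permutations $\pi_1,\pi_2\in S_n$ via $f(r_i)=r_{\pi_1(i)}$ and $f(c_j)=c_{\pi_2(j)}$. Edge-color preservation gives $(H_1)_{\pi_1(i),\pi_2(j)}=(H_2)_{ij}$ for all $i,j$, which is exactly $H_1=(\pi_1,\pi_2)\cdot H_2=P_{\pi_1}H_2P_{\pi_2}^T$.

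The only delicate point is a convention issue rather than a mathematical one. The row and column colors must be distinct from each other, and the isomorphism notion must preserve vertex colors. Otherwise an isomorphism could swap the roles of rows and columns, which would correspond to transposition, and the paper explicitly excludes transposition from equivalence. I would state this convention explicitly. I would also check the index direction $\pi$ versus $\pi^{-1}$ against the paper's definition of $P_\pi$, which is routine bookkeeping.
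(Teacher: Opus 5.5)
Your proof is correct. The paper omits this proof as straightforward, and your argument is the intended verification: a pair $(\pi_1,\pi_2)$ corresponds to the vertex-color-preserving bijection $r_i\mapsto r_{\pi_1(i)}$, $c_j\mapsto c_{\pi_2(j)}$, and preserving edge colors is equivalent to $(H_1)_{\pi_1(i),\pi_2(j)}=(H_2)_{ij}$, i.e.\ to $H_1=(\pi_1,\pi_2)\cdot H_2$. Your remark that isomorphisms must preserve the two vertex color classes, so that rows cannot be exchanged with columns, is the one convention the argument needs, and the index bookkeeping you planned to check is already correct as you wrote it.
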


The automorphisms of the permutation graph are exactly the automorphisms of the corresponding matrix when we allow only row and column permutations. Isomorphic graphs have identical canonical forms. By Lemma~\ref{graph_lemma}, the canonical form of the permutation graph immediately provides us with the canonical form of the corresponding matrix with respect to permutation equivalence.

For computing the canonical form of a graph and for finding the automorphisms of a graph, we utilize external algorithms CanonGraph and AutGen. In practice, we use \emph{nauty}~\cite{MP14} to perform both of these external tasks. Since \emph{nauty} only handles vertex-colored graphs, we first transform the vertex- and edge-colored permutation graph into a vertex-colored graph. From this transformed graph, we can recover both the automorphisms and the canonical form of the original graph. Such a conversion is described in \cite[p.~64]{Nauty_User_Guide}. The resulting vertex-colored graph has 
$O(n\log k)$ vertices, where $k$ is the number of distinct edge colors.

The CanonGraph algorithm returns a pair of permutations $\pi_1$ and $\pi_2$ such that applying $\pi_1$ to the row vertices and $\pi_2$ to the column
vertices yields the canonical form of the graph. This pair of permutations
also uniquely determines the action on all other vertices. The AutGen algorithm returns generators of the automorphism group of the input graph. These automorphisms are also given in the form of pairs of permutations 
acting on rows and columns. Additionally, PermGraph will denote a function that returns the permutation 
graph of the input matrix. These algorithms are combined in Algorithm~\ref{alg_caneq} 
to compute the canonical form of a complex Hadamard matrix $H$ with respect to permutation equivalence.

\begin{algorithm}
\caption{CanPerm$(H)$}
\label{alg_caneq}
\begin{algorithmic}
\State \textbf{external} PermGraph($H$), CanonGraph($G$)
\State $G \gets$ PermGraph($H$)
\State $(\pi_1,\pi_2) \gets $ CanonGraph($G$)
\State \textbf{return} ($(\pi_1,\pi_2),(\pi_1,\pi_2)\cdot H)$
\end{algorithmic}
\end{algorithm}

Utilizing Lemmas~\ref{ab} and \ref{graph_lemma}, we can construct a canonical form of a complex Hadamard matrix with respect to the definition of equivalence of complex Hadamard matrices given in Definition~\ref{def:eq}. Let $H$ be a complex Hadamard matrix. By Lemma~\ref{ab}, the collection $\{\mathscr{D}_{a,b}(H)\}_{1 \leq a,b \leq n}$ contains all possible dephased forms up to permutation equivalence. Constructing permutation graphs for all those matrices and canonizing them, we obtain a collection of $n^2$ dephased matrices in canonical form with respect to permutation equivalence. A total order $<$ can be defined for the individual canonical forms, and the biggest (or smallest) of these can be taken as a canonical form of the complex Hadamard matrix. This procedure, which we will soon improve, is summarized in Algorithm~\ref{alg:canon1}. 

\begin{algorithm}
\caption{NaiveCanonicalForm$(H)$}
\label{alg:canon1}
\begin{algorithmic}
\State \textbf{external} CanPerm($H$)
\State $C\gets I$
\For{$a\in \{1,\ldots, n\}$}
\For{$b\in \{1,\ldots, n\}$}
\State $(\pi_{a,b},H_{a,b})\gets$ CanPerm($\mathscr{D}_{a,b}(H)$)
\If{$C=I$ or $C< H_{a,b}$ }
\State $(\sigma, C)\gets  (\pi_{a,b}\, \sigma_{a,b}, H_{a,b})$
\EndIf
\EndFor
\EndFor
\State \textbf{return} $(\sigma,C)$
\end{algorithmic}
\end{algorithm}

Computing canonical forms for the permutation graphs is the most computationally intensive step of Algorithm~\ref{alg:canon1}. We can avoid some of those computations by utilizing automorphisms of the complex Hadamard matrix. For $a,b\in \{1,\ldots,n\}$ and $\Gamma\subseteq S_n \times S_n$, we define an external function GetOrbit($a,b,\Gamma$) that returns the set $$\text{orb}_{\Gamma}(a,b)=\{(\pi_1(a),\pi_2(b))\;|\;\ (\pi_1,\pi_2)\in \langle \Gamma \rangle\},$$
that is, the \textit{orbit} of $(a,b)$ under $\langle \Gamma \rangle $. Before applying this function to improve Algorithm~\ref{alg:canon1}, we shall use it to compute the automorphism group of a complex Hadamard matrix.

We shall next develop an algorithm for finding generators of the group Aut($H$) for a given complex Hadamard matrix $H$. We use the notations $H_{a,b}:=\mathscr{D}_{a,b}(H)$ and $\pi_{a,b} \in S_n \times S_n$ for the pair of permutations such that $\pi_{a,b}\cdot H_{a,b}$ is in canonical form with respect to permutation equivalence.

The automorphisms of the permutation graph of $H_{1,1}$ give the stabilizer $$\text{stab}(1,1)=\{(\pi_1,\pi_2)\in \text{Aut}(H)\;|\;(\pi_1(1),\pi_2(1))=(1,1)\}$$ of the entry in row 1 and column 1, which is one subgroup in a stabilizer chain. The remaining automorphisms can be obtained by utilizing the following lemma.

\begin{lemma}
\label{new_lemma}
Let $H$ be a complex Hadamard matrix. Then
$
(a,b)\in \operatorname{orb}_{\operatorname{Aut}(H)}(1,1)
$
if and only if $H_{1,1}$ and $H_{a,b}$ are permutation equivalent.
\end{lemma}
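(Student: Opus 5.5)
The plan is to prove both directions directly from the characterization of $\Aut(H)$ through the dephasing operator. The tools are: the identity $\mathscr{D}((\pi_1,\pi_2)\cdot H)=\mathscr{D}(H)$ for $(\pi_1,\pi_2)\in\Aut(H)$; the uniqueness of dephasing in Theorem~\ref{theorem1}; and equation~\eqref{commutes}, which says that dephasing commutes with permutations fixing $1$ in both coordinates.

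For the forward direction, suppose $(a,b)=(\pi_1(1),\pi_2(1))$ for some $(\pi_1,\pi_2)\in\Aut(H)$. Then $H$ and $H'=(\pi_1,\pi_2)\cdot H$ differ only by a diagonal pair: $H'=D_1HD_2^\dagger$ with $D_1,D_2$ diagonal and unimodular.
\begin{itemize}
\item Since diagonal scalings commute with $\sigma_{1,1}$, which is trivial, we get $H_{1,1}=\mathscr{D}(H)=\mathscr{D}(H')$. This uses Theorem~\ref{theorem1}: the dephased form is independent of a prior diagonal rescaling, since the composite of two diagonal dephasing pairs is again a dephasing pair and hence agrees up to $K$.
\item Now $\mathscr{D}(H')=\mathscr{D}((\pi_1,\pi_2)\cdot H)$. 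Write $(\pi_1,\pi_2)=(\tau_1,\tau_2)\,\sigma_{a,b}$ with $\tau_1=\pi_1(1\;a)$ and $\tau_2=\pi_2(1\;b)$, both fixing $1$, using that $(1\;a)$ is an involution. The indices need care here: $\pi_1(1)=a$, so I should instead write $\pi_1=\tau_1(1\;a)$ with $\tau_1(1)=1$, i.e.\ $\tau_1=\pi_1(1\;a)$, and check that it fixes $1$.
\item Then $\sigma_{a,b}\cdot H$ is moved by $(\tau_1,\tau_2)$, and \eqref{commutes} gives $\mathscr{D}((\pi_1,\pi_2)\cdot H)=(\tau_1,\tau_2)\cdot\mathscr{D}(\sigma_{a,b}\cdot H)=(\tau_1,\tau_2)\cdot H_{a,b}$.
\end{itemize}
Hence $H_{1,1}$ and $H_{a,b}$ are permutation equivalent. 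In this step I must be careful about which of $\pi$ or $\pi^{-1}$ sends $1$ to $a$ under the action convention $(\pi_1,\pi_2)\cdot H=(h_{\pi_1^{-1}(i),\pi_2^{-1}(j)})$. With this convention, row $a$ of $\sigma_{a,b}\cdot H$ is the old row $1$. If the orbit in the statement is phrased with $\pi$ rather than $\pi^{-1}$, I will replace the automorphism by its inverse, which is harmless because $\Aut(H)$ is a group.

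For the converse, suppose $H_{1,1}=(\tau_1,\tau_2)\cdot H_{a,b}$ for some permutations $\tau_1,\tau_2$. By Theorem~\ref{theorem1}, $H_{a,b}=\mathscr{D}(\sigma_{a,b}\cdot H)$ equals $(E_1,E_2)\cdot(\sigma_{a,b}\cdot H)$ for a diagonal pair $(E_1,E_2)$, and likewise $H_{1,1}=(F_1,F_2)\cdot H$. Combining these, $H=(X,Y)\cdot H$ with
\[
X=F_1^\dagger P_{\tau_1}E_1P_{(1\;a)},\qquad Y=F_2^\dagger P_{\tau_2}E_2P_{(1\;b)},
\]
both of which lie in $\mathcal{G}_n$. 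So $(X,Y)K$ is an automorphism. Its image under $\varphi$ is $(\tau_1(1\;a),\tau_2(1\;b))$, since a diagonal matrix conjugated by a permutation is still diagonal, and we can push the diagonals to the left to obtain the $DP$ form. This automorphism sends $a\mapsto\tau_1(1)$. The remaining issue is that $\tau_1$ need not fix $1$.

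This is the main obstacle. I plan to handle it by noting that $\tau_1$ maps the all-ones first row of $H_{a,b}$ to some all-ones row of $H_{1,1}$. I will then compose with a suitable automorphism, or else argue on the inverse permutation, so that the resulting automorphism relates $1$ and $a$ exactly. Concretely, I would use the forward direction applied to the orbit of $(a,b)$: the set of pairs $(c,d)$ with $H_{c,d}$ permutation equivalent to $H_{1,1}$ is closed under the $\Aut(H)$-action. The automorphism just constructed moves $(a,b)$ to $(\tau_1(1),\tau_2(1))$.

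If that does not close the gap directly, I will instead read the orbit statement through the inverse automorphism, which maps $(\tau_1(1),\tau_2(1))$ to $(a,b)$. In that case I need to show separately that $(\tau_1(1),\tau_2(1))$ lies in the orbit of $(1,1)$. For this I will use that $H_{1,1}$ is permutation equivalent to itself via $(\tau_1,\tau_2)$ restricted appropriately, and then reapply the construction above with $a=b=1$.
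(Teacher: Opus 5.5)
Your forward direction is sound once you make the correction you flag yourself. If $\pi_1(1)=a$, then $\pi_1(1\;a)$ sends $1$ to $\pi_1(a)$, not to $1$, so the factorization has to be done for the inverse automorphism $\rho=\pi^{-1}$. For $\rho$, the pair $\tau=\rho\,\sigma_{a,b}$ fixes $(1,1)$, and \eqref{commutes} gives $H_{1,1}=\mathscr{D}(\rho\cdot H)=\tau\cdot H_{a,b}$. This is the paper's argument with $\alpha'=\alpha^{-1}\sigma_{a,b}$. In the converse, your explicit pair $(X,Y)$ and the computation $\varphi((X,Y)K)=(\tau_1(1\;a),\tau_2(1\;b))$ are also correct.

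The genuine gap is the step you call the main obstacle, and neither of your workarounds closes it.
\begin{itemize}
\item Closure of $\{(c,d): H_{c,d}\text{ permutation equivalent to }H_{1,1}\}$ under $\Aut(H)$ only shows that $(a,b)$ and $(\tau_1(1),\tau_2(1))$ lie in one orbit. To conclude, you would need $(\tau_1(1),\tau_2(1))\in\operatorname{orb}_{\Aut(H)}(1,1)$, which is another instance of the statement you are proving.
\item The fallback fails as well. $(\tau_1,\tau_2)$ carries $H_{a,b}$, not $H_{1,1}$, onto $H_{1,1}$. So it gives no self-equivalence of $H_{1,1}$ moving $(1,1)$ to $(\tau_1(1),\tau_2(1))$, and ``restricted appropriately'' does not identify one.
\end{itemize}

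What you are missing is that the obstacle does not exist. In a dephased complex Hadamard matrix, every row other than the first is orthogonal to the all-ones first row. Its entries therefore sum to $0$, so it cannot be all ones; the same holds for columns. Row $\tau_1(1)$ and column $\tau_2(1)$ of $H_{1,1}=(\tau_1,\tau_2)\cdot H_{a,b}$ are the images of the all-ones first row and column of $H_{a,b}$. This forces $\tau_1(1)=\tau_2(1)=1$. Your automorphism then sends $(a,b)$ to $(1,1)$, and its inverse places $(a,b)$ in $\operatorname{orb}_{\Aut(H)}(1,1)$. This is exactly the paper's observation that $\pi_{a,b}^{-1}\pi_{1,1}$ must fix $(1,1)$ because both matrices are dephased.

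Without orthogonality, you could instead replace $(\tau_1,\tau_2)$ by $((1\;c)\tau_1,(1\;d)\tau_2)$, where $c=\tau_1(1)$ and $d=\tau_2(1)$. This works because swapping identical all-ones rows and columns fixes $H_{1,1}$. Either way, some such explicit step is required.
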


\begin{proof}
Suppose that $(a,b)\in \operatorname{orb}_{\operatorname{Aut}(H)}(1,1)$. Then there exists an automorphism $\alpha=(\alpha_1,\alpha_2)$ such that $\alpha_1(1)=a$ and $\alpha_2(1)=b$. Set $\alpha'=\alpha^{-1}\sigma_{a,b}$. Then $\alpha'$ fixes $(1,1)$ and we can utilize equation~\eqref{commutes} to obtain
$$
H_{1,1}=\mathscr{D}(\alpha^{-1} \cdot H_{1,1})
=\mathscr{D}(\alpha'\,\sigma_{a,b}\cdot H_{1,1})
=\alpha'\cdot\mathscr{D}(\sigma_{a,b} \cdot H_{1,1})
=\alpha' \cdot H_{a,b}.
$$
For the converse, assume that $\pi_{1,1}\cdot H_{1,1}=\pi_{a,b}\cdot H_{a,b}$. Let
$$
\gamma_{a,b} :=\sigma_{a,b}\,\pi_{a,b}^{-1}\,\pi_{1,1}.
$$
Since
$
H_{a,b}=\pi_{a,b}^{-1}\,\pi_{1,1}\cdot H_{1,1}
$
and both $H_{1,1}$ and $H_{a,b}$ are dephased, the permutation
$\pi_{a,b}^{-1}\,\pi_{1,1}$ must fix $(1,1)$. Thus, $\gamma_{a,b}$ maps $(1,1)$ to $(a,b)$. Additionally, $\gamma_{a,b}$ is an automorphism, since
$$
\mathscr{D}(\gamma_{a,b}\cdot H)
=\mathscr{D}(\sigma_{a,b}\cdot (\pi_{a,b}^{-1} \, \pi_{1,1} \cdot H_{1,1}))
=\mathscr{D}(\sigma_{a,b} \cdot H_{a,b})
=\mathscr{D}(H).
$$
Therefore, $(a,b)\in \text{orb}_{\operatorname{Aut}(H)}(1,1)$.
\end{proof}

By Lemma~\ref{new_lemma}, we can now obtain the size of the orbit of the entry $(1,1)$ by computing the multiset $\{\pi_{a,b} \cdot H_{a,b}\}_{1 \leq a,b \leq n}$ and checking which of the matrices are identical to $\pi_{1,1}\cdot H_{1,1}$. For each such case, we can also compute the automorphism $\gamma_{a,b}$ appearing in the proof of Lemma~\ref{new_lemma}. Since the automorphisms $\gamma_{a,b}$ cover the entire orbit of the entry $(1,1)$, they form a \textit{left transversal} of $\operatorname{stab}(1,1)$. That is, each element of $\operatorname{Aut}(H)$ can be expressed as a composition of some $\gamma_{a,b}$ and some element of $\operatorname{stab}(1,1)$. The order of the automorphism group can be obtained with the orbit--stabilizer theorem, $$|\Aut(H)|=|\text{stab}(1,1)|\cdot|\mbox{orb}_{\text{Aut}(H)}(1,1)|.$$

We do not necessarily have to canonize all the matrices in $\{H_{a,b}\}_{1 \leq a,b \leq n}$. We can proceed iteratively over $a,b \in \{1,\ldots, n\}$ and for each $\gamma_{a,b}$ obtained, we can check if it can be used to enlarge the known orbit of (1,1). If we find a new element $(a',b')$ in the orbit of (1,1) it will save us from the canonization of the permutation graph of $H_{a',b'}$ as we know that it already has to be permutation equivalent to $H_{1,1}$. On the other hand, if $(a,b)$ does not belong to the orbit of $(1,1)$, then no element of orb$(a,b)$ can belong there either and we immediately disregard matrices $H_{a',b'}$, where $(a',b')\in\text{orb}(a,b)$.

Let $g_1,\ldots,g_l\in S_n\times S_n$ be the generators of stab$(1,1)$ obtained from the permutation graph of $\mathscr{D}(H)$. After iterating through all $a,b\in\{1,\ldots n\}$, we have obtained automorphisms $\gamma_1,\ldots,\gamma_d$. Now these elements generate the whole automorphism group,
$$\text{Aut}(H)=\langle g_1,\ldots,g_l,\gamma_1,\ldots,\gamma_{d} \rangle.$$ The whole procedure for determining the generators of Aut$(H)$ is outlined in Algorithm~\ref{aut-alg}.

\begin{algorithm}
\caption{AutomorphismGroup$(H)$}
\label{aut-alg}
\begin{algorithmic}
\State \textbf{external}  AutGen($G$), CanPerm($H$), GetOrbit($a,b,\Gamma$), PermGraph($H$)
\State $O\gets\{(1,1)\}$
\State $(\pi_{1,1},H_{1,1})\gets$ CanPerm($\mathscr{D}(H)$)
\State $\Gamma\gets$ AutGen(PermGraph$(\mathscr{D}(H)))$
\For{$a\in\{1,\ldots,n\}$}
\For{$b\in\{1,\ldots,n\}$}
\IfThen{$(a,b)\in O$}{\textbf{continue}}
\State $(\pi_{a,b},H_{a,b})\gets$ CanPerm($\mathscr{D}_{a,b}(H)$)
\If{$ H_{1,1} =  H_{a,b}$}
\State $\Gamma \gets\Gamma\cup\{\sigma_{a,b}\,\pi_{a,b}^{-1}\,\pi_{1,1}\}$
\State $O\gets O \cup$GetOrbit(1,1,$\Gamma$) 
\Else 
\State $O\gets O \cup$GetOrbit$(a,b,\Gamma)$
\EndIf
\EndFor
\EndFor
\State \textbf{return} $\Gamma$
\end{algorithmic}
\end{algorithm}

We can use the same idea of recording orbits when searching for the canonical form of $H$. In Algorithm~\ref{alg:canon-matrix}, we track the orbits already visited and accumulate automorphisms of $H$, which we then use to enlarge the orbits as the search proceeds.

\begin{algorithm}
\caption{CanonicalForm$(H)$}
\label{alg:canon-matrix}
\begin{algorithmic}
\State \textbf{external}  AutGen($G$), CanPerm($H$), GetOrbit($a,b,\Gamma$), PermGraph($H$)
\State $O\gets\{(1,1)\}$
\State $(\pi_{1,1},H_{1,1})\gets$ CanPerm($\mathscr{D}(H)$)
\State $(\sigma, C)\gets (\pi_{1,1}, H_{1,1})$
\State $\Gamma\gets$ AutGen(PermGraph$(\mathscr{D}(H)))$

\For{$a\in \{1,\ldots, n\}$}
\For{$b\in \{1,\ldots, n\}$}
\IfThen{$(a,b)\in O$}{\textbf{continue}}
\State $(\pi_{a,b},H_{a,b})\gets$ CanPerm($\mathscr{D}_{a,b}(H)$)
\If{$H_{1,1} = H_{a,b}$}
\State $\Gamma \gets\Gamma\cup\{\sigma_{a,b}\,\pi_{a,b}^{-1}\,\pi_{1,1}\}$
\EndIf
\If{$C<  H_{a,b}$}
\State $(\sigma, C)\gets (\pi_{a,b}\,\sigma_{a,b}, H_{a,b})$
\EndIf
\State $O\gets O\cup$GetOrbit$(a,b,\Gamma)$
\EndFor
\EndFor
\State \textbf{return} $(\sigma,C)$
\end{algorithmic}
\end{algorithm}

For completeness, we have also outlined Algorithm~\ref{alg:eq} for deciding if two complex Hadamard matrices $H_1$ and $H_2$ are equivalent. If $H_1\cong H_2$ the algorithm returns $\pi \in S_n \times S_n$ such that $\mathscr{D}(H_1)=\mathscr{D}(\pi\cdot H_2 )$.

\begin{algorithm}
\caption{AreEquivalent($H_1,H_2$)}
\label{alg:eq}
\begin{algorithmic}
\State \textbf{external}  CanonicalForm$(H)$
\State $(\sigma_1,C_1)\gets$CanonicalForm($H_1$)
\State $(\sigma_2,C_2)\gets$CanonicalForm($H_2$)
\If{$C_1=C_2$}
\State \textbf{return} $\sigma^{-1}_1\sigma_2$
\Else
\State \textbf{return} $\texttt{null}$
\EndIf
\end{algorithmic}
\end{algorithm}

\subsection{Invariants}
\label{inv}

An \emph{invariant} of a mathematical object is a property that
is the same for equivalent objects. Invariants can be utilized to
support algorithms for determining equivalence and automorphism
groups.

Many invariants have been proposed for complex Hadamard matrices.
For our purposes, we would like an invariant that is both 
sensitive and fast, which can be conflicting wishes. The
following invariant, introduced by Haagerup \cite{H97}, fulfills
our needs well. 

For two arbitrary rows and two arbitrary columns of a complex
Hadamard matrix of order $n$, $i,k$ and $j,l$, respectively, we define
\[
I(i,k,j,l) = h_{ij}\overline{h_{il}}\overline{h_{kj}}h_{kl}.
\]
Now the \emph{multiset}
\[
\bigcup_{\substack{1 \leq i,k \leq n, i\neq k \\ 1 \leq j,l \leq n, j \neq l}} I(i,k,j,l)
\]
is an invariant of the matrix. In the original paper \cite{H97},
the \emph{set} is considered, but a multiset is obviously more
sensitive. As transposition of $i$ and $k$ or $j$ and $l$
gives a conjugate of $I(i,k,j,l)$, 
we can speed up the calculation of this invariant
by a factor of 4 by considering the equivalent invariant obtained by requiring that $i < k$ and $j < l$, and 
taking the conjugate if the imaginary part is negative.

This matrix invariant can further be turned into an entry invariant.
In the case of entry invariants, different values mean that no automorphism maps one of the positions onto the other.
An entry invariant for row $a$, column $b$ is the multiset
\[
\bigcup_{\substack{1 \leq k \leq n, a\neq k \\ 1 \leq l \leq n, b \neq l}} I(a,k,b,l).
\]
The entry invariants can be obtained while computing
the matrix invariant, with some additional bookkeeping.

The Haagerup invariant is equivalent to the multiset of absolute values of $2\times 2$ minors as 
$$| h_{ij}h_{kl} - h_{il} h_{kj}|=|h_{ij}\overline{h_{il} h_{kj}}h_{kl}-1||h_{il} h_{kj}|=|I(i,k,j,l)-1|.$$
Including higher-order minors yields the more sensitive \emph{fingerprint} invariant \cite{S10}, but the computational cost is much higher.

Invariants can be used to speed up Algorithm~\ref{alg:eq} by deciding on the inequivalence of two complex Hadamard matrices immediately when the Haagerup invariants disagree. 

Entries with different values of an entry invariant cannot be in the same orbit. Hence, the Haagerup entry invariant can be used to reduce the number of permutation graphs for which canonical forms need to be found in Algorithm~\ref{aut-alg}  and~\ref{alg:canon-matrix}. Not only can we prune the search immediately when the entry invariants differ, but we can also restrict our consideration to the matrices with entry-invariant value that occurs least often (choosing the largest value in case of a tie).

\subsection{Symmetric and Hermitian matrices}
\label{SH}

Given a complex Hadamard matrix $H$, one can ask whether $H\cong \overline{H}$, $H\cong H^T$ or $H\cong H^\dagger$. These can all be directly decided with the algorithm developed.

One can also ask whether there exists a Hermitian or symmetric matrix equivalent to $H$. If $H'$ is Hermitian and $H \cong H'$, then $ H \cong H' = H'^\dagger \cong H^\dagger, $ so this is a stronger property than the previously considered $H \cong H^\dagger$. Similarly, if $H'$ is symmetric and $H \cong H'$, then $ H \cong H^T.$ 

We present two algorithms to test for these properties. Both rely on basic properties of symmetric and Hermitian matrices. A similar problem is considered in \cite{compWeight} in the context of weighing matrices on $q$th roots of unity.

\begin{lemma}
\label{lemma_1}
    Let $H$ be a complex Hadamard matrix. Then $H$ is equivalent to a Hermitian matrix if and only if there exists a matrix $M\in \mathcal{G}_n$ such that $MH$ is Hermitian.
\end{lemma}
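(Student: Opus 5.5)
The direction from right to left is immediate: if $MH$ is Hermitian for some $M\in\mathcal{G}_n$, then $MH=(M,I)\cdot H$ with $(M,I)\in\mathcal{G}_n\times\mathcal{G}_n$, so $H$ is equivalent to the Hermitian matrix $MH$. The work is in the other direction, where I will show that the two-sided action can be collapsed into a one-sided one.

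Suppose $H\cong H'$ with $H'$ Hermitian, so that $H'=XHY^{\dagger}$ for some $(X,Y)\in\mathcal{G}_n\times\mathcal{G}_n$. The key observation is that conjugation by a single element of $\mathcal{G}_n$ preserves Hermitian-ness. Indeed, for $Z\in\mathcal{G}_n$ (which is unitary, being a product of a unimodular diagonal matrix and a permutation matrix) we have $(ZH'Z^{\dagger})^{\dagger}=ZH'^{\dagger}Z^{\dagger}=ZH'Z^{\dagger}$. Choosing $Z=Y$ gives the Hermitian matrix
\[
YH'Y^{\dagger}=YXHY^{\dagger}Y=YXH,
\]
where we used $Y^{\dagger}Y=I$. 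Setting $M:=YX\in\mathcal{G}_n$, which lies in $\mathcal{G}_n$ because it is a group, we obtain that $MH$ is Hermitian, as required.

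The only points to verify are that elements of $\mathcal{G}_n$ are unitary and that $\mathcal{G}_n$ is closed under products. Both follow directly from the definition, since each row and each column of such a matrix contains exactly one unimodular entry. The one genuine step is the idea of conjugating $H'$ by $Y$ so that the column factor cancels; once that is in hand, nothing else in the argument is difficult.
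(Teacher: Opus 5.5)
Your strategy of conjugating the Hermitian matrix $H'$ by a single element of $\mathcal{G}_n$ so that the column factor cancels is the right one, and it is exactly the paper's. However, you conjugate by the wrong element, and the displayed computation is false. Since $H'=XHY^{\dagger}$, we get $YH'Y^{\dagger}=YXHY^{\dagger}Y^{\dagger}$, not $YXHY^{\dagger}Y$. The factor $Y^{\dagger}$ at the right end of $H'$ is followed by another $Y^{\dagger}$, so nothing cancels. What you actually have is $YXH=YH'Y$, which is not of the form $ZH'Z^{\dagger}$ and need not be Hermitian.

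A concrete counterexample: take $H=e^{i\pi/4}F_2$, $X=I$, $Y=e^{i\pi/4}I\in\mathcal{G}_2$. Then $H'=XHY^{\dagger}=F_2$ is Hermitian. Your choice $M=YX=e^{i\pi/4}I$ gives $MH=iF_2$, and $(iF_2)^{\dagger}=-iF_2\neq iF_2$. So the witness $M$ you construct does not work, and the step you singled out as ``the one genuine step'' is precisely where the argument breaks.

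The repair is immediate: apply your (correct) observation that $ZH'Z^{\dagger}$ is Hermitian for every $Z\in\mathcal{G}_n$ with $Z=Y^{\dagger}$ instead of $Z=Y$. Then $Y^{\dagger}H'Y=Y^{\dagger}XHY^{\dagger}Y=Y^{\dagger}XH$ is Hermitian, so $M:=Y^{\dagger}X\in\mathcal{G}_n$ works. In the example above this gives $M=e^{-i\pi/4}I$ and $MH=F_2$. This corrected version is exactly the paper's proof, which multiplies on the left by $Y^{\dagger}$ and on the right by $Y$. The rest of what you wrote is fine: the easy direction, the unitarity of elements of $\mathcal{G}_n$, and closure under products.
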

\begin{proof}
Suppose that $XHY^\dagger$ is Hermitian. If we multiply on the left by $Y^\dagger$ and on the right by $Y$, we obtain $Y^\dagger XH$. Since this does not affect Hermiticity, setting $M=Y^\dagger X$ gives the desired result. The converse is immediate.
\end{proof}

Based on Lemma~\ref{lemma_1}, we can now process the matrix $M$ by a straightforward depth-first search described in Algorithm~\ref{alg1}.

\begin{algorithm}
\caption{IsEquivalentToHermitian(H)}
\label{alg1}
\begin{algorithmic}
\Procedure{DFS}{$i,H$}
  \IfThen{$i = n+1$}{\Return True}
  \For{$j \in \{i, \ldots, n\}$}
    \State $H'\gets ((i\;j),\text{id})\cdot H$.
    \State Multiply the $i$th row of $H'$ by $\overline{h_{1,i}h_{i,1}}$.
    \If{$h'_{i,k}h'_{k,i} =1$ for all $k\in \{1,\ldots,i\}$}
        \IfThen{DFS($i+1,H'$)}{\Return True}
    \EndIf
  \EndFor
  \State \Return False
\EndProcedure
\State $H \gets \mathscr{D}(H)$
\State \Return DFS(1,$H$)
\end{algorithmic}
\end{algorithm}

\begin{lemma}
\label{lemma_2}
     Let $H$ be a dephased complex Hadamard matrix. Suppose that for some pair of permutations $(\pi_1,\pi_2) \in S_n \times S_n$ we have $\mathscr{D}((\pi_1,\pi_2) \cdot H^\dagger) = H$. Then $H$ is equivalent to a Hermitian matrix if and only if there exists an automorphism $(\alpha_1,\alpha_2) \in \operatorname{Aut}(H)$ for which $\pi_1^{-1}\alpha_1=\alpha_2^{-1}\pi_2.$
\end{lemma}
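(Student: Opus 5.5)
The plan is to turn Hermitian equivalence into an equivalence $H^\dagger\cong H$ whose row and column permutations are mutually inverse. The hypothesis says all such equivalences form a coset of $\operatorname{Aut}(H)$, and Theorem~\ref{autt} handles the diagonal parts.

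\textbf{Step 1 (reduction via Lemma~\ref{lemma_1}).} By Lemma~\ref{lemma_1}, $H$ is equivalent to a Hermitian matrix iff some $M=EP_\rho\in\mathcal{G}_n$ makes $MH$ Hermitian. Here $E$ is a unimodular diagonal matrix and $P_\rho$ a permutation matrix. Writing $(MH)^\dagger=MH$ and solving for $H^\dagger$ gives
\[
H^\dagger = (EP_\rho)\,H\,(EP_\rho)=(EP_\rho,(EP_\rho)^\dagger)\cdot H .
\]
This is an equivalence from $H$ to $H^\dagger$. Its row permutation part is $\rho$, and its column permutation part is $\rho^{-1}$, since $(EP_\rho)^\dagger=P_\rho^TE^\dagger=P_{\rho^{-1}}E^\dagger$, which can be rewritten as a diagonal matrix times $P_{\rho^{-1}}$.

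\textbf{Step 2 (only the permutation parts matter).} I will show that the converse also holds without any condition on the diagonal parts. Suppose $H^\dagger=D_1P\,H\,PD_2$ with $P=P_\rho$. Apply $\dagger$ to both sides, and substitute the expression for $H^\dagger$ back in. This yields an equivalence of $H$ to itself whose permutation parts are trivial, namely
\[
H=(D_2^\dagger P^TD_1P)\,H\,(PD_2P^TD_1^\dagger).
\]
By Theorem~\ref{autt}, an automorphism with trivial permutation part lies in $K$. Hence $P^TD_1P=sD_2$ for some $s\in S$, which means $PD_2P^T=\overline{s}D_1$. Then
\[
H^\dagger=D_1PH(PD_2P^T)P=\overline{s}\,D_1PHD_1P .
\]
Choose $t$ with $t^2=\overline{s}$ and set $E=tD_1$. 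Then $H^\dagger=(EP)H(EP)$, so $EPH$ is Hermitian. Consequently, $H$ is equivalent to a Hermitian matrix iff there is an equivalence $H^\dagger=(X,Y)\cdot H$ whose permutation parts $(\beta_1,\beta_2)$ satisfy $\beta_2=\beta_1^{-1}$.

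\textbf{Step 3 (parametrising all equivalences).} The hypothesis $\mathscr{D}((\pi_1,\pi_2)\cdot H^\dagger)=H$ together with Theorem~\ref{theorem1} gives an equivalence from $H^\dagger$ to $H$ with permutation parts $(\pi_1,\pi_2)$. Composing with this fixed equivalence shows that the permutation parts of all equivalences from $H$ to $H^\dagger$ are exactly $(\pi_1^{-1}\alpha_1,\pi_2^{-1}\alpha_2)$ with $(\alpha_1,\alpha_2)\in\operatorname{Aut}(H)$. The condition $\beta_2=\beta_1^{-1}$ then reads $\pi_2^{-1}\alpha_2=\alpha_1^{-1}\pi_1$. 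Taking inverses of both sides turns this into $\pi_1^{-1}\alpha_1=\alpha_2^{-1}\pi_2$, which is the stated condition.

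The main obstacle is Step 2: showing that the diagonal factors $D_1,D_2$ can always be forced into the symmetric shape $(EP,EP)$. The key idea is the kernel argument via Theorem~\ref{autt}, plus taking a square root of the scalar $s$. Beyond that, I must keep the composition conventions for $(\pi_1,\pi_2)\cdot H$ and $P_\pi$ consistent, so the final condition comes out in exactly the stated form rather than as an inverse or conjugate variant.
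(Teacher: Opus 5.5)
Your proof is correct, and its skeleton matches the paper's. You reduce via Lemma~\ref{lemma_1} to $MHM=H^\dagger$. Necessity comes from the permutation parts $(\sigma,\sigma^{-1})$ of $(M,M^\dagger)$, shifted by $(\pi_1,\pi_2)$. For sufficiency, the two diagonal factors are forced to agree up to a scalar, which a square root then absorbs.

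The real difference is in how that rigidity is proved. The paper sets $A=PH$, writes $D_1AD_2=A^\dagger$ and argues entrywise: $x_iy_j=\overline{a_{ij}a_{ji}}$ is symmetric in $i,j$, so $x_i/y_i$ is constant. You instead apply $\dagger$ and substitute back, obtaining a purely diagonal self-equivalence of $H$. You then conclude from the trivial kernel in Theorem~\ref{autt} that it is scalar. These are two proofs of the same fact: a diagonal pair fixing a matrix with unimodular entries lies in $K$. The paper's proof is self-contained and computational; yours reuses machinery already established and explains \emph{why} the diagonal parts cannot obstruct.

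Your organisation also has a small advantage. You first isolate the criterion that some equivalence $H\to H^\dagger$ has permutation parts of the form $(\beta,\beta^{-1})$. You then show that the permutation parts of all equivalences $H\to H^\dagger$ form the coset $\{(\pi_1^{-1}\alpha_1,\pi_2^{-1}\alpha_2):(\alpha_1,\alpha_2)\in\Aut(H)\}$. This treats both directions uniformly. It also makes explicit a verification the paper leaves implicit: that $(\pi_1\sigma,\pi_2\sigma^{-1})$ really is an automorphism.
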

\begin{proof}

    From Lemma~\ref{lemma_1}, it follows that $H$ is equivalent to a Hermitian matrix if and only if there exists $M\in \mathcal{G}_n$ such that $MHM=H^\dagger$. For necessity, assume that such an $M$ exists. Let $\sigma \in S_n$ be the permutation corresponding to the permutation part of $M$, so that $\mathscr{D}((\sigma, \sigma^{-1})\cdot H)=(M,M^\dagger)\cdot H$. Then setting $(\alpha_1,\alpha_2)=(\pi_1 \sigma, \pi_2 \sigma^{-1})$ gives the automorphism for which $\pi_1^{-1}\alpha_1=\alpha_2^{-1}\pi_2$. 
    
    Now, for the converse, suppose the existence of $(\alpha_1,\alpha_2) \in \operatorname{Aut}(H)$ satisfying the stated conditions. We have that
    $$\mathscr{D}((\pi_1^{-1}\alpha_1,\pi_2^{-1}\alpha_2) \cdot H)=\mathscr{D}((\pi_1^{-1},\pi_2^{-1})\cdot H)=H^\dagger.$$
    Hence, there exist matrices $D_1=\text{diag}(x_1,\ldots,x_n)$, $D_2=\text{diag}(y_1,\ldots,y_n)\in\mathcal{G}_n$ and a permutation matrix $P=P_{\pi_1^{-1}\alpha_1}=P_{\alpha_2^{-1}\pi_2}$ for which $D_1PHD_2=(PH)^\dagger$.  Setting $A=(a_{i,j})=PH$, we obtain the entrywise relation 
    $x_i a_{i,j}y_{j}=\overline{ a_{j,i}}$ implying that $x_iy_j =\overline{a_{i,j}a_{j,i}}$ for all $i,j\in \{1,\ldots,n\}$. Since the right-hand side is symmetric in $i$ and $j$, we have $x_i/y_i=x_j/y_j$ for all $i,j\in\{1,\ldots,n\}$. Therefore, these ratios are all equal to some constant $c\in S$. Thus, $D_1=cD_2$, and we obtain the desired $M$ by setting $M=\sqrt{c}D_2P$.
\end{proof}
 
Lemma~\ref{lemma_2} gives us an easy way to decide whether there exists an equivalent Hermitian matrix for a given complex Hadamard matrix by utilizing the methods developed. In Algorithm~\ref{alg2}, the procedure is outlined in terms of the algorithms from the previous section.

\begin{algorithm}[h]
\caption{IsEquivalentToHermitian2$(H')$}
\label{alg2}
\begin{algorithmic}
\State \textbf{external} AutomorphismGroup($H$), AreEquivalent($H_1,H_2$)
\State $H\gets \mathscr{D}(H')$
\State $(\pi_1,\pi_2)\gets$ AreEquivalent($H,H^\dagger$)
\IfThen{$(\pi_1,\pi_2)=\texttt{null}$}{\textbf{return} False}
\For{$(\alpha_1,\alpha_2)\in\langle$AutomorphismGroup($H$)$\rangle$}
\IfThen{$\pi_1^{-1}\alpha_1=\alpha_2^{-1}\pi_2$}{\textbf{return} True}
\EndFor
\State \textbf{return} False

\end{algorithmic}
\end{algorithm}

Algorithms~\ref{alg1} and~\ref{alg2} can, with minor adjustments, be adapted to consider symmetric matrices instead of Hermitian matrices.

\subsection{Implementation}
\label{implementation}

So far it has been assumed that the entries of complex Hadamard matrices are arbitrary 
unimodular complex numbers.
Unfortunately, the requirement of precise handling of arbitrary values enforces
excessively stringent restrictions
on implementations of the algorithms. Therefore, one may instead wish to implement them in
a numerical framework. Such an implementation in the C programming language
has been carried out by the authors and is available at \cite{codes}. Moreover, numerical 
methods are necessary whenever the entries of the complex Hadamard matrix are known 
only up to some precision, which is typically the case when matrices have been 
constructed with some iterative approach. We shall now have a brief look at some of 
the central issues in a numerical implementation.


\paragraph{Representation of numbers} A unimodular complex number 
$s = \exp(\theta i)$ is represented through its argument $\theta$, 
where $-\pi < \theta \leq \pi$. 

\paragraph{Comparison of numbers} A core subtask of the numerical computations
is that of determining whether two unimodular complex numbers are identical or
different. For such a comparison, we define the distance between 
$s_1=\exp(\theta_1 i)$ and $s_2=\exp(\theta_2i)$ as the shortest arc length 
on the unit circle between them, formally
 $$
 d(s_1,s_2)=d'(\theta_1,\theta_2)=\min(|\theta_2-\theta_1|,\;2\pi-|\theta_2-\theta_1|)
 .$$
With respect to a general parameter $\Delta > 0$, values are now
considered identical whenever $d(s_1,s_2) < \Delta$. The choice of a proper
value of $\Delta$ is easy if we know in advance a lower bound on 
$d(s_1,s_2)$, $s_1 \neq s_2$, which is the case, for example, for Butson-type
Hadamard matrices. Generally, a useful guideline is to choose $\Delta$ slightly larger than the precision of the entries in the input matrix.

\paragraph{Numerical accuracy} With unimodular complex numbers represented
by their arguments, multiplication is done by adding an argument and
multiplication by a complex conjugate is done by subtracting an argument.
Each entry is subject to only a few additions and subtractions, so the
impact of the computations on the accuracy is negligible. 

\paragraph{Canonical form} There are many technical issues in defining and
implementing a numerical canonical form; let us just highlight one major
point. Whereas the distance function correctly sees that $\pi - \epsilon$ is
close to $-\pi+\epsilon$, care must be taken when sorting is carried out
based on values of elements. This task gets easier if values of $\theta$ in
the interval $-\pi < \theta < -\pi + \Delta$ are replaced by $\pi$.
Specifically, coloring of entry vertices in the permutation graph can then be
done setwise with colors $1,2,\ldots$ in the order from the smallest to 
the largest values.

\vspace{0.5cm}
A numerical approach can still be used as a tool for obtaining analytical
results. Namely, one can analytically (try to) prove existence of mappings 
obtained in computations of automorphism groups and equivalence. This
approach was used extensively and successfully in this work, as we shall
see in Section~\ref{S3}.

\section{Applications}
\label{S3}

\subsection{Specific complex Hadamard matrices}

For the Fourier matrix $F_n=[\omega^{(j-1)(k-1)}]_{j,k}$, where $\omega=\exp(2\pi i/n)$, the automorphism group can be deduced analytically.

\begin{theorem}
    The automorphism group of a Fourier matrix of order $n$ is isomorphic to
    $$(C_n\times C_n)  \rtimes (\mathbb{Z}/n\mathbb{Z})^\times.$$
\end{theorem}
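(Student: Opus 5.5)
Index rows and columns by $\mathbb{Z}/n\mathbb{Z}$, so $(F_n)_{x,y}=\omega^{xy}$. By Theorem~\ref{autt}, it suffices to determine $\Aut(F_n)$, the set of pairs $(\pi_1,\pi_2)$ such that $\omega^{\pi_1(x)\pi_2(y)}=\lambda_x\mu_y\,\omega^{xy}$ for some unimodular $\lambda_x,\mu_y$. We work with the relabelled form $\omega^{\pi_1(x)\pi_2(y)}$ rather than $h_{\pi_1^{-1}(i),\pi_2^{-1}(j)}$; replacing $(\pi_1,\pi_2)$ by $(\pi_1^{-1},\pi_2^{-1})$ is a bijection on the group, so this is harmless. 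We first exhibit explicit automorphisms and then show there are no others.

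\textbf{Step 1 (lower bound).} Consider maps of the form $\pi_1(x)=ux+a$ and $\pi_2(y)=u^{-1}y+b$, where $u\in(\mathbb{Z}/n\mathbb{Z})^\times$ and $a,b\in\mathbb{Z}/n\mathbb{Z}$. For these,
$$\pi_1(x)\pi_2(y)=xy+u^{-1}ay+ubx+ab,$$
so the entry is $\omega^{xy}$ times a row factor times a column factor. Hence each such pair is an automorphism.

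These pairs are pairwise distinct, so they give $n^2\varphi(n)$ automorphisms. They are closed under composition: $a$ translates rows and $b$ translates columns, giving $C_n\times C_n$, and $u$ acts on these translations by scaling, $(a,b)\mapsto(ua,u^{-1}b)$. This exhibits the subgroup $(C_n\times C_n)\rtimes(\mathbb{Z}/n\mathbb{Z})^\times$. The translation subgroup is normal, since it is the kernel of the map sending an affine pair to its multiplier $u$.

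\textbf{Step 2 (upper bound).} Apply the orbit--stabilizer count $|\Aut(H)|=|\mathrm{stab}(1,1)|\cdot|\mathrm{orb}(1,1)|$. The orbit of the $(0,0)$ entry is already transitive on all $n^2$ positions by Step 1, so it remains to show $|\mathrm{stab}|\le\varphi(n)$.

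Let $(\pi_1,\pi_2)$ fix $(0,0)$. By equation~\eqref{commutes}, such a pair is an automorphism exactly when it is a permutation automorphism of the dephased matrix $F_n$ itself, that is,
$$\omega^{\pi_1(x)\pi_2(y)}=\omega^{xy}\quad\text{for all }x,y.$$
This is equivalent to $\pi_1(x)\pi_2(y)\equiv xy \pmod n$.

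Set $y=1$ and let $c=\pi_2(1)$, so $c\,\pi_1(x)\equiv x$ for all $x$. Because $\pi_1$ is a bijection, the map $z\mapsto cz$ is surjective, which forces $c$ to be a unit. Therefore $\pi_1(x)=c^{-1}x$.

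Substituting back gives $c^{-1}x\,\pi_2(y)\equiv xy$ for all $x$. Taking $x=1$ yields $\pi_2(y)=cy$. So the stabilizer consists exactly of the $\varphi(n)$ pairs indexed by $u=c^{-1}$, and this completes the count.

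The expected obstacle is only the bookkeeping: matching the action convention $(\pi_1,\pi_2)\cdot H$ to the affine maps, and checking that the composition law is the stated semidirect product with $u$ acting as $(a,b)\mapsto(ua,u^{-1}b)$, rather than, for example, a diagonal action. Either form of this action gives the stated isomorphism type.
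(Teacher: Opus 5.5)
Your proof is correct and has the same structure as the paper's. The translations form a copy of $C_n\times C_n$ acting regularly on the entries, the stabilizer of the corner entry consists of the multiplier pairs $(x\mapsto ux,\ y\mapsto u^{-1}y)$, and the action $(a,b)\mapsto(ua,u^{-1}b)$ is exactly the paper's conjugation formula. The main difference is that you derive the stabilizer directly (setting $y=1$ forces $\pi_2(1)$ to be a unit), whereas the paper cites \cite{H19}; this makes your argument self-contained. You also identify $\Aut(F_n)$ with the affine group by orbit--stabilizer counting instead of the transversal-plus-normality argument.

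You should drop your closing remark that a diagonal action would give the same isomorphism type, because it is false in general. For $n=5$, the scalar action $(a,b)\mapsto(ua,ub)$ makes all six subgroups of order $5$ in $C_5\times C_5$ normal. Under $(a,b)\mapsto(ua,u^{-1}b)$, only the two coordinate axes are normal. So the specific action you derived is what pins down the isomorphism type.
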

\begin{proof}
    In \cite{H19}, it has been shown that $\text{stab}(1,1)\leq \Aut(F_n)$ equals $$\{(\pi_j,\pi_j^{-1}) \;|\; \gcd(j,n) = 1\} \cong (\mathbb{Z}/n\mathbb{Z})^\times,$$ where $\pi_j(k)=(j(k-1)\mod n) + 1.$ Let $\sigma_k = (1 \;2\; \cdots\; n)^k.$ We have that 
    $\mathscr{D}((\sigma_a,\sigma_b)\cdot F_n)=F_n$ for all $a,b\in \{0,\ldots,n-1\}$.
Thus, $(\sigma_a,\sigma_b)$ is always an automorphism of $F_n$. Moreover, the group $$N\coloneqq\{(\sigma_a,\sigma_b) \;|\; a,b\in \{0,\ldots,n-1\} \} \cong C_n\times C_n$$ acts regularly on the entries of $F_n$ so it forms a left transversal of stab$(1,1)$. Hence, every element in Aut$(F_n)$ can be expressed as $(\sigma_a, \sigma_b)(\pi_j,\pi^{-1}_j)$ for some $a,b\in\{0,\ldots ,n-1\}$ and $j\in (\mathbb{Z}/n\mathbb{Z})^\times$. In addition, $$(\pi_j ,\pi_j^{-1})( \sigma_a, \sigma_b)(\pi_j^{-1}, \pi_j) = (\sigma_{aj},\sigma_{bj^{-1}})\in N,$$ so $N$ is a normal subgroup of Aut$(F_n)$. We have Aut$(F_n)=N\hspace{1pt}\text{stab}(1,1)$, $N\cap \text{stab}(1,1)=\{\operatorname{id}\}$ and $N\trianglelefteq \operatorname{Aut}(F_n)$, so we can write Aut$(F_n)$ as a semidirect product of $N$ and stab$(1,1)$.
\end{proof}

Automorphisms of real Hadamard matrices have been studied extensively. For small orders, automorphism groups of real Hadamard matrices can be found in \cite{P08}. There are also results for Butson-type matrices that provide explicit groups \cite{GPP18, M01} and classification related work \cite{EFO15, LOS20, LSO13} that give more general results. To further validate correctness of the implementation of our algorithm, we compared its output with the available data. In the cases we checked, the results coincide.

In Table \ref{T1}, automorphism groups for various complex Hadamard matrices are presented. The necessary accuracy considerations have been made and the results of the computation can be taken as exact. To the best of the authors' knowledge, only the groups corresponding to real Hadamard matrices and some small-order Butson-type Hadamard matrices have appeared in the literature. The automorphism group of the complex Hadamard matrix $S_6$ is obtained in \cite{GPP18} as a byproduct of computing the outer automorphism of the symmetric group on six
points. 

Most of the matrices are gathered from \cite{B06}, and the same naming conventions are used. In particular, Walsh matrices are denoted by $H_n$ and Butson-type matrices from \cite{H28ref,LSO17} are denoted by BH$(n,q)_s$, where $s$ is their index in the corresponding database. 

 The column Structure contains a structural description of the computed automorphism group given by GAP \cite{GAP4} in short format (cyclic groups are denoted with their orders). Times presented are the times taken for determining the automorphism group. The algorithms were run on an HP EliteBook 840 G11 laptop with an Intel Core Ultra 5 125U CPU. The last column answers the five questions whether $H\cong \overline{H}$, $H\cong H^T$, $H\cong H^\dagger$, whether $H$ is equivalent to a symmetric matrix, and whether $H$ is equivalent to a Hermitian matrix in this order; Y for yes and N for no.

{\setlength{\tabcolsep}{4pt} 
\begin{longtable}{|llrlrc|}

\caption{Automorphism groups} \label{T1} \\
\toprule
Name & Ref.&  $|\text{Aut}|$ & Structure  & Time & $\cong$ \\
\midrule
\endfirsthead

\multicolumn{6}{c}%
{{\tablename\ \thetable{} (cont.)}} \\
\toprule
Name & Ref.&  $|\text{Aut}|$ & Structure  & Time & $\cong$ \\ 
\midrule
\endhead
\hline
\endfoot

\hline \hline
\endlastfoot

$F_2$ & - & 4 & \small $2^2$  & 0.03 ms & \small YYYYY \\ 
$F_3$ & - & 18 & \small $(3^2):2$  & 0.03 ms & \small YYYYN \\ 
$F_4$ & - & 32 & \small $(4^2):2$  & 0.02 ms & \small YYYYN \\ 
$H_4$ & - & 96 & \small $((2^4):3):2$  & 0.02 ms & \small YYYYY \\ 
$F_5$ & - & 100 & \small $(5^2):4$  & 0.04 ms & \small YYYYN \\ 
$F_6$ & - & 72 & \small $2^2 \times ((3^2):2)$  & 0.05 ms & \small YYYYN \\ 
$F_2\otimes F_3$ & - & 72 & \small $2^2 \times ((3^2):2)$  & 0.05 ms & \small YYYYN \\ 
$S_6$ & \cite{S6} & 360 & \small $A6$  & 0.04 ms & \small YYYYN \\ 
$D_6$ & \cite{H97} & 60 & \small $A5$  & 0.04 ms & \small YYYYY \\ 
$C_6$ & \cite{H97} & 12 & \small $D12$  & 0.07 ms & \small YYYYY \\ 
$P_7$ & \cite{P7} & 12 & \small $D12$  & 0.08 ms & \small NYNYN \\ 
$F_7$ & - & 294 & \small $7^2:6$  & 0.06 ms & \small YYYYN \\ 
$C_{7A}$ & \cite{H97} & 168 & \small $PSL(3,2)$  & 0.06 ms & \small NYNYN \\ 
$C_{7B}$ & \cite{H97} & 168 & \small $PSL(3,2)$  & 0.06 ms & \small NYNYN \\ 
$C_{7C}$ & \cite{H97} & 14 & \small $D14$  & 0.09 ms & \small NYNYN \\ 
$C_{7D}$ & \cite{H97} & 14 & \small $D14$  & 0.08 ms & \small NYNYN \\ 
$Q_7$ & \cite{S11} & 6 & \small $6$  & 0.06 ms & \small NYNYN \\ 
$F_8$ & - & 256 & \small $8^2:2^2$  & 0.09 ms & \small YYYYN \\ 
$S_8$ & \cite{S8} & 48 & \small $2 \times S4$  & 0.08 ms & \small YNNNN \\ 
$D_{8A}$ & \cite{D8} & 32 & \small $(2^3):(2^2)$  & 0.10 ms & \small YNNNN \\ 
$V_8$ & \cite{B06} & 16 & \small $D16$  & 0.12 ms & \small NYNYN \\ 
$A_8$ & \cite{A8} & 48 & \small $GL(2,3)$  & 0.11 ms & \small NYNYN \\ 
$H_8$ & - & 10752 & \small $2^6:PSL(3,2)$  & 0.05 ms & \small YYYYY \\ 
$F_2 \otimes F_4$ & - & 512 & \small $(2^2 \times ((2^3):(2^2))):2^2$  & 0.08 ms & \small YYYYN \\ 
$F_9$ & - & 486 & \small $9^2:6$  & 0.15 ms & \small YYYYN \\ 
$S_9$ & \cite{S9} & 36 & \small $2 \times ((3^2):2)$  & 0.16 ms & \small NNYNN \\ 
$B_9$ & \cite{B6} & 18 & \small $(3^2):2$  & 0.25 ms & \small YYYYN \\ 
$N_9$ & \cite{B6} & 12 & \small $D12$  & 0.26 ms & \small NYNYN \\ 
$Y_9$ & \cite{TB} & 8 & \small $8$  & 0.12 ms & \small YYYYN \\ 
$F_3 \otimes F_3$ & - & 3888 & \small $((3^4:Q8):3):2$  & 0.11 ms & \small YYYYY \\ 
$D_{10}$ & \cite{D10} & 720 & \small $S6$  & 0.13 ms & \small YYYYY \\ 
$F_{10}$ & - & 400 & \small $2^2 \times ((5^2):4)$  & 0.16 ms & \small YYYYN \\ 
$N_{10B}$ & \cite{B06} & 16 & \small $(4 \times 2):2$  & 0.40 ms & \small NYNYN \\ 
$G_{10}$ & \cite{LSO13} & 5 & \small $5$  & 0.44 ms & \small YYYYY \\ 
$N_{10A}$ & \cite{B6} & 72 & \small $(S3 \times S3):2$  & 0.23 ms & \small YYYYN \\ 
$S_{10}$ & \cite{S10} & 50 & \small $(5^2):2$  & 0.45 ms & \small YYYYN \\ 
$F_2\otimes F_5$ & - & 400 & \small $2^2 \times ((5^2):4)$  & 0.24 ms & \small YYYYN \\ 
$F_{11}$ & - & 1210 & \small $11^2:10$  & 0.18 ms & \small YYYYN \\ 
$C_{11A}$ & \cite{H97} & 660 & \small $PSL(2,11)$  & 0.17 ms & \small NYNYN \\ 
$N_{11A}$ & \cite{N11A} & 10 & \small $D10$  & 0.65 ms & \small NYNYN \\ 
$N_{11B}$ & \cite{B6} & 10 & \small $D10$  & 0.26 ms & \small NYNYN \\ 
$Q_{11}$ & \cite{S11} & 20 & \small $D20$  & 0.19 ms & \small YYYYN \\ 
$\small \text{BH}(12,2)$ & \cite{H12} & 95040 & \small $M12$  & 0.13 ms & \small YYYYY \\ 
$C_{13A}$ & \cite{H97} & 78 & \small $13:6$  & 0.22 ms & \small YYYYN \\ 
$C_{13B}$ & \cite{H97} & 78 & \small $13:6$  & 0.21 ms & \small YYYYN \\ 
$S_{14}$ & \cite{S9} & 196 & \small $7^2:4$  & 1.14 ms & \small YYYYN \\ 
$L_{14A}$ & \cite{LSO13} & 1 & \small $1$  & 3.99 ms & \small NNNNN \\ 
$H_{16}$ & - & 5160960 & \small $2^8:A8$  & 0.28 ms & \small YYYYY \\ 
$\small\text{BH}(21,3)_3$ & \cite{LSO17} & 240 & \small $2 \times S5$  & 6.55 ms & \small NYNYN \\ 
$\small\text{BH}(21,3)_7$ & \cite{LSO17} & 8 & \small $2^3$  & 27.86 ms & \small NNNNN \\ 
$\small\text{BH}(21,3)_{19}$ & \cite{LSO17} & 168 & \small $PSL(3,2)$  & 4.90 ms & \small NNNNN \\ 
$\small\text{BH}(28,2)_{109}$ & \cite{H28ref} & 168 & \small $(14 \times 2):6$  & 4.11 ms & \small YNNNN \\ 
$\small \text{BH}(28,2)_{278}$ & \cite{H28ref} & 28 & \small $D28$  & 43.30 ms & \small YNNNN \\ 
$H_{32}$ & - & $2^{20}\cdot 9765$  & \small $2^{10}:PSL(5,2)$ & 0.93 ms & \small YYYYY\\
$B_{36}$ & \cite{B36} & 144 & \small $S3 \times S4$  & 6.54 ms & \small NNNNN \\ 
$P_7\otimes S_6$ & - & 4320 & \small $2 \times S3 \times A6$  & 5.11 ms & \small NYNYN \\ 
$A_8\otimes V_8$ & - & 768 & \small $GL(2,3) \times D16$  & 35.26 ms & \small NYNYN \\ 
$F_{71}$ & - & 352870 & \small $71^2:70$  & 9.70 ms & \small YYYYN \\

\bottomrule
\end{longtable}}

\subsection{Families of complex Hadamard matrices}

In the literature, complex Hadamard matrices often appear as continuous families.  For example,
    $$F_4(a)=\left[\begin{array}{rrrr}
1&  1        &  1&          1\\
1&  ia& -1& -ia\\
1& -1        &  1&         -1\\
1& -ia& -1&  ia
\end{array}\right]$$
gives a complex Hadamard matrix for all $a\in S$. 

Instead of computing properties of a specific family member it can be more interesting to find properties that hold for all matrices in the family. For a family of complex Hadamard matrices $\mathcal{H}$, we define the group $\bigcap_{H\in\mathcal{H}}\text{Aut}(H)$ as the \emph{generic automorphism group} of that family.

We can obtain a generic automorphism group by computing the automorphism group for a randomly chosen family member, and then showing analytically that the permutations in the group are automorphisms for every matrix in the family.

In Table \ref{T2}, generic automorphism groups for various families of complex Hadamard matrices are presented. The groups have been analytically checked to stabilize all the matrices in that family. The naming of the columns follows that of Table~\ref{T1}.

{\setlength{\tabcolsep}{4pt} 
\begin{longtable}{|llrlr|}
\caption{Generic automorphism groups of families} \label{T2} \\
\toprule
Name & Ref.&  $|\text{Aut}|$ & Structure  & Time  \\
\midrule
\endfirsthead

\multicolumn{5}{c}%
{ \tablename\ \thetable{} (cont.)} \\
\toprule
Name & Ref.&  $|\text{Aut}|$ & Structure  & Time  \\ 
\midrule
\endhead
 \hline
\endfoot

\hline \hline
\endlastfoot

$F_4^{(1)}$ & \cite{H12} & 16 & \small $2 \times D8$  & 0.04 ms  \\ 
$F^{(2)}_6$ & \cite{H97} & 6 & \small $6$  & 0.06 ms  \\ 
$D^{(1)}_6$ & \cite{D6} & 12 & \small $A4$  & 0.06 ms  \\ 
$B^{(1)}_6$ & \cite{B6} & 3 & \small $3$  & 0.07 ms  \\ 
$M_6^{(1)}$ & \cite{M6} & 2 & \small $2$  & 0.05 ms  \\ 
$X_6^{(2)}$ & \cite{X6} & 3 & \small $3$  & 0.06 ms  \\ 
$K_6^{(2)}$ & \cite{K62} & 2 & \small $2$  & 0.05 ms  \\ 
$K_6^{(3)}$ & \cite{K63} & 1 & \small $1$  & 0.06 ms  \\ 
$G_6^{(4)}$ & \cite{G6} & 1 & \small $1$  & 0.06 ms  \\ 
$P_7^{(1)}$ & \cite{P7} & 2 & \small $2$  & 0.08 ms  \\ 
$F^{(5)}_{8}$ & \cite{TZ06} & 8 & \small $2^3$  & 0.13 ms  \\ 
$S^{(4)}_{8}$ & \cite{S8} & 1 & \small $1$  & 0.20 ms  \\ 
$D^{(5)}_{8A}$ & \cite{D8} & 8 & \small $2^3$  & 0.13 ms  \\ 
$T^{(3)}_{8B}$ & \cite{TB} & 8 & \small $2^3$  & 0.17 ms  \\ 
$T^{(3)}_{8C}$ & \cite{V25} & 4 & \small $4$  & 0.14 ms  \\ 
$T^{(3)}_{8D}$ & \cite{V25} & 4 & \small $2^2$  & 0.15 ms  \\ 
$T^{(3)}_{8E}$ & \cite{V25} & 4 & \small $2^2$  & 0.24 ms  \\ 
$T^{(3)}_{8F}$ & \cite{V25} & 4 & \small $2^2$  & 0.16 ms  \\ 
$F^{(3)}_{9}$ & \cite{TZ06} & 27 & \small $3^3$  & 0.12 ms  \\ 
$K^{(2)}_9$ & \cite{K9} & 18 & \small $(3^2):2$  & 0.18 ms  \\ 
$D^{(3)}_{10}$ & \cite{D10} & 4 & \small $2^2$  & 0.34 ms  \\ 
$F^{(4)}_{10}$ & \cite{TZ06} & 10 & \small $10$  & 0.20 ms  \\ 
$G^{(1)}_{10}$ & \cite{LSO13} & 5 & \small $5$  & 0.16 ms  \\ 
$N^{(3)}_{10B}$ & \cite{LSO13} & 4 & \small $2^2$  & 0.58 ms \\ 
$H^{(7)}_{12}$ & \cite{D10} & 1 & \small $1$  & 2.09 ms  \\ 
\bottomrule
\end{longtable}}

\section*{Acknowledgements}
The second author acknowledges financial support of the Finnish Ministry of Education and Culture through the Quantum Doctoral Education Pilot Program (QDOC VN/3137/2024-OKM-4) and the Research Council of Finland through the Finnish Quantum Flagship project (Aalto 358877).

\section*{Data availability}
The data supporting the findings of this study are publicly available at \cite{codes}.

\section*{Declarations}
\paragraph{Conflict of interest} The authors have no relevant financial or non-financial interests to disclose.

\end{document}